\newtheorem{theorem}{Theorem}[section]
\newtheorem{lemma}[theorem]{Lemma}
\newtheorem{proposition}[theorem]{Proposition}
\newtheorem{example}[theorem]{Example}
\begin{document}

\title{High Rate LDPC Codes from Difference Covering Arrays}

\author{D.~Donovan\thanks{D. Donovan is with the Centre for Discrete Mathematics and Computing,  The University of Queensland, St Lucia 4072, Australia. Email: dmd@maths.uq.edu.au}%
,  A.~Rao\thanks{A. Rao is with the School of Science (Mathematical Sciences), RMIT Univerity, Melbourne 3000, Australia. Email: asha@rmit.edu.au}%
  , and E.~\c{S}ule~Yaz\i c\i\thanks{E. \c{S}ule Yaz\i c\i ~~is with the Department of   Mathematics, Ko\c{c} University,  Rumelifeneri Yolu, 34450, Sar\i yer, Istanbul,  TURKEY.  Email: eyazici@ku.edu.tr}.}
%


\maketitle

\begin{abstract}
This paper presents a combinatorial construction of low-density parity-check (LDPC) codes from difference covering arrays. While the original construction by Gallagher was by randomly allocating bits in a sparse parity-check matrix, over the past 20 years researchers have used a variety of more structured approaches to construct these codes, with the more recent constructions of well-structured LDPC coming from balanced incomplete block designs (BIBDs) and from Latin squares over finite fields. However these constructions have suffered from the limited orders for which these designs exist. Here we present a construction of LDPC codes of length $4n^2 - 2n$ for all $n$ using the cyclic group of order $2n$. These codes achieve high information rate (greater than 0.8) for $n \geq 8$, have girth at least 6 and have minimum distance 6 for $n$ odd. 
\end{abstract}

Keywords:
LDPC codes, combinatorial construction, difference covering arrays, Partial BIBDs.

\section{Introduction}

Since the introduction of low density parity check (LDPC) codes by Gallager in 1962 \cite{Gallager62}, and subsequent reintroduction by McKay and Neal \cite{MacKayNeal97,MacKay99} in the 1990s, researchers have expended much effort on the design and construction of these codes. The main attraction of these codes is their ability to achieve rates close to the Shannon limit. The codes originally designed by Gallagher were pseudorandom and good LDPC codes were found mainly through computer searches \cite{Gallager62, Gallager63}. Such randomly generated codes do not lend themselves to encoding very easily due to the lack of structure. In addition, it is hard to determine the  minimum distances of these codes. 

Since that time there have been many constructions for LDPC codes using a variety of different approaches, such as those constructed from finite geometries by Kou, Lin and Fossorier \cite{KouLinFoss01}, those with group-structure \cite{TanSriFuja} and those from Ramajunan graphs \cite{RosenVont01}. More recently, starting with Johnson and Weller \cite{JohnWell01} researchers have looked to combinatorial designs to construct well-structured LDPC codes, with  Vasic and Milenkovic \cite{VasMil04}  using balanced incomplete block designs (BIBDs) and Zhang et al \cite{Zhang10} constructing quasicyclic LDPC codes based on Latin squares over finite fields. The main constraint with these constructions is the very restricted orders for which these designs exist. For example, Zhang et. al's \cite{Zhang10} construction uses sets of mutually orthogonal Latin squares which are known to exist only for orders a power of a prime. In this paper we give an alternate construction to that given by Zhang et. al \cite{Zhang10}. The pseudo orthogonal  Latin squares that we use in our construction are much more prolific than the combinatorial structures used previously. The LDPC codes  of length $4n^2 - 2n$ constructed here, using the cyclic group of order $2n$, exist for all $n \in \mathbf{Z}$. In addition, unlike past combinatorial constructions, we are able to give explicit algebraic expressions for the rate of the code as well as the minimum distance of the code. These codes achieve high information rate ($\geq 0.8$) for $n \geq 8$ (code length 240) and for $n$ odd, have minimum distance 6, whereas in the past the codes have had to be of length approximately 5000bits of achieve such rates and it has been very difficult to establish the minimum distance of the codes. We start with the preliminary definitions.

A $(m,\gamma, \rho)$-{\em regular binary LDPC code} ${\cal C}$ \cite{Gallager62} of block length $m$ is given by the null
space of a sparse parity-check matrix $H$ over GF(2) where the
row weight ($\rho$) and column weight ($\gamma$)  are both constant.  The {\em distance} of the code is taken to be the minimum Hamming distance between any two code words. Since the zero vector is a codeword the distance of the code  is equal to the minimum weight over all the codewords. The aim here is to construct codes with  $\rho$ and $\gamma$ small compared to the code length $m$, and that satisfy the {\em RC-constraint}; that is,  the inner product of any two distinct rows or any two distinct columns of the  parity check
matrix $H$ is less than or equal to 1.

Parity-check matrices are often visualised as {\em Tanner  bipartite graphs} with vertex set  $B\cup V$ where $B$ is comprised of code bits and $V$ is comprised of parity-check equations. An edge $\{U,w\}, U\in B$ and $w\in V$, exists in this bipartite graph if and only if $U$ is a term in the check equation $w$.
The RC-constraint on $H$ implies that the Tanner graph  of the LDPC
code has no cycles of  length less than 6, giving a Tanner graph of  girth  at least 6 \cite{Zhang10}.

It is known that the incidence matrix of a balanced incomplete block design (BIBD) or partially balanced incomplete block design (PBIBD) can be used to construct a parity-check matrix for an  LPDC code, see for example \cite{VasMil04}.
A $(v,c,\lambda)$ {\em balanced incomplete block design (BIBD)} is an ordered pair $(V,B)$, where $V$ is a point set of size $v$ and $B$ is a set of, $b$, $c$-subsets\footnote{It is usual for the subset size to be denoted by $k$, but we use $c$ instead, since in coding theory, $k$ is normally reserved for the dimension of the
code.} of $V$, called {\em blocks}, chosen in such a way that each pair of elements of $V$ occurs together in $\lambda$ blocks. For a BIBD it can be shown that each element of $V$ occurs in $r=\lambda(v-1)/(c-1)$ blocks with the total number of blocks given by $b=\lambda (v^2-v)/(c^2-c).$  A BIBD is said to be {\em resolvable} if the $b$ blocks can be partitioned into $b/v$ {\em parallel classes} such that every point of $V$ occurs in some block of each parallel class.

In a partially balanced incomplete block design (PBIBD) the value of $\lambda$ may vary across the pairs $x,y\in V$. A PBIBD is said to be {\em resolvable} if its $b$ blocks can be partitioned into $b/v$ parallel classes in such a way that every point of $V$ occurs in one block of the parallel class.

The incidence matrix of a BIBD  (PBIBD)  $(V,B)$ with $B=\{B_1,\dots, B_b\}$, is a $v\times b$ matrix $A=[a_{ij}]$ such that
\begin{eqnarray*}
a_{ij}&=\left\{\begin{array}{ll}
1,&\mbox{if }i\in B_j,\\
0,&\mbox{otherwise.}
\end{array}\right.
\end{eqnarray*}
For more details on the construction of LPDC codes from block designs see \cite{Mahadevan} or \cite{VasMil04}. In this setting, the Tanner graph contains an edge from $U$ to $w$ if point  $w\in V$ occurs in block  $U\in B$. In particular, Vasic and Milenkovic \cite{VasMil04}  construct parity-check matrices with column sum  $\gamma=c=3$ using the incidence matrix, $A$  of cyclic $(v,3,1)$ BIBDs. It is known that cyclic $(v,3,1)$ BIBDs exist for all $v\equiv 1\,(\mbox{mod }6)$. However any cyclic $(v,3,1)$ BIBD containing a Pasch configuration (a set of 4 blocks of the form $\{0,y,z\},\{0,u,w\},\{x,y,u\},\{x,z,w\}$ will have minimum distance 4.  Vasic and Milenkovic  provide three constructions for Pasch free $(v,3,1)$ BIBDs but these constructions are for the restricted  orders:    $v$ is a power of a prime of the form $v\equiv 1\, (\mbox{mod }6)$ or $v \equiv 7\,(\mbox{mod }12)$,  or $v = p^n$ where $p \equiv 7\,(\mbox{mod }12)$.  Vasic and Milenkovic \cite{VasMil04} suggest that not enough of these designs are known to allow for flexibility in code length and column weight especially for constructing codes with high-rate and/or moderate length codes. In \cite{VasMil04} the {\em rate} of the code constructed is defined to be   $R=(b-{\rm rank}(A))/b$.

  Zhang et.al. \cite{Zhang10} have drawn the connection between PBIBD and Latin squares and used these to construct $(m,\gamma, \rho)$-regular binary LDPC codes, with information rate at least $(\rho-\gamma)/\rho$. While this construction technique delivers useful LPDC it is restricted by the fact that it uses sets  of mutually orthogonal Latin squares which are constructed from finite fields of order a power of a prime.

 In this paper we provide an alternative construction to that of   Zhang et.al. \cite{Zhang10}, based on the cyclic group of even order $2n$. This construction has the added advantage that the arithmetic is  simplified as it is taken as addition modulo $2n$. This simplified setting allows us to provide transparent arguments for the rank of the parity-check matrix and the size of the null space, as well as the distance and rate of the code. Such explicit formulations for the distance and rate of the code have not be possible before. The main ingredient for our constructions are difference covering arrays and pairs of pseudo orthogonal Latin squares, both of which are defined in the next two sections.

\section{Difference Covering Arrays and Partially Balanced Incomplete Block Designs}

Let $(G,*)$ be an abelian group.
 If $G$  acts on a set $X$, then the set $O_x = \{gx : g \in G\}, x \in X$, is called the {\emph orbit} of $x$. A \emph{difference covering array}  DCA$(k,\eta;m)$ is an
 $\eta \times k$ matrix ${\cal Q}=[q(i,j)]$ with entries from $G$  such that,
\begin{itemize}
\item  for all distinct pairs of columns  $0\leq j,j^\prime\leq k-1$  the difference set
 $\Delta_{j,j^\prime}=\{q(i,j)*(q(i,j^\prime) )^{-1} \mid 0\leq i\leq \eta-1\}$
 contains every element of $G$ at least  once.
\end{itemize}
For the remainder of this paper it is assumed that $(G,*)$ is the cyclic group $({\Bbb Z}_{2n},*)$ where $n\geq 2$ and $*$ is addition modulo $2n$. In this case the difference covering array is  said to be {\em cyclic}.
Since the set $\Delta_{j,j^\prime}$ is not altered by adding a constant vector over ${\Bbb Z}_{2n}$  to any row or any column,  we may assume that the first row and first column contain only $0$. To reduce notation we delete the first row and only work with arrays  $Q=[q(i,j)]$ that satisfy the following properties:
\begin{itemize}
\item[{\bf P1.}]  The first column of $Q$ contains only $0$ and the remaining columns  contain each  entry  of  ${\Bbb Z}_{2n}$ precisely once, and
\item[{\bf P2.}]  For all pairs of distinct columns, $j$ and $j^\prime$, $ j \neq 0 \neq j^\prime$,  $\Delta_{j,j^\prime}=\{q(i,j)-q(i,j^\prime)  \mbox{ mod }2n\mid 0\leq i\leq n\}={\Bbb Z}_{2n}\setminus\{0\}$.
\end{itemize}
As it is easy to reinstate the row of all zeros, we will abuse the definition and  call $Q$ a   DCA$^*(k,2n;2n)$ with rows and column labelled $0,\dots 2n-1$ and $0,\dots, k-1$ respectively.

The following example of a cyclic DCA$^*(4, 6;6)$ that satisfies  P1 and P2, is taken from \cite{RSS}.
$$
Q^T=\left[\begin{array}{ccccccc}
0&0&0&0&0&0\\
0&1&2&3&4&5\\
1&3&5&0&2&4\\
3&0&4&1&5&2\\

\end{array}\right]
$$

Using  properties P1. and P2. it can be shown that for any cyclic DCA$^*(k,2n;2n)$ and any pair of distinct columns  $j$ and $j^{\prime}, j \neq 0 \neq j^\prime$,
$\Delta_{j,j^\prime}=\{1,2,\dots,n,n,\dots, 2n-1\}$
with repetition retained, see \cite{DDHKR} for a proof. For results on  difference covering assays see \cite{Yin1, Yin2},  but for general $k$ not a lot is known. However it is known that when $k=3$ the matrix
$X=[x(j,g)]$, where
\begin{eqnarray}
x(j,g)&=\left\{\begin{array}{ll}
0,&\mbox{if }g=0\\
j,&\mbox{if }g=1 \\
\left\{\begin{array}{ll}
2j+1   & \mbox{for } 0 \leq j \leq n -1,\\
2(j-n)  & \mbox{for } n \leq j \leq  2n -1,
\end{array}\right. & \mbox{if }g=2
\end{array}
\right.\label{DCA}
\end{eqnarray}
forms a DCA$^*(3,2n;2n)$ satisfying properties P1 and P2.  In addition, if $k=4$ then there exists a DCA$^*(4,2n;2n)$  for all $n\geq 2$ except possibly $n=73$  \cite{DDHKR,LvR}. But it should be noted that not all
these difference covering arrays are cyclic.

DCAs can be used to construct PBIBDs, where pairs of points occur in 0 or 1 block. While any DCA$^*(3,2n;2n)$ satisfying P1. and P2. can be used to construct PBIBDs, here we use $X=[x(j,g)]$ as defined in Equation \ref{DCA}, to  construct a cyclic PBIBD with the  $2n-1$  ordered starter blocks:
\begin{eqnarray*}
SB_j &=& (x(j,0), x(j,1), x(j,2))=(0, j, x(j,2)), \mbox{ for } j \in {\Bbb Z}_{2n} \setminus \{n\},
\end{eqnarray*}
and develop each of these into orbits giving sets of blocks:  $0 \leq a \leq 2n - 1$,
\begin{eqnarray*}
B_{ja} \hspace{-2mm}&=&\hspace{-2mm} \{ a, j+a\,(\mbox{mod }2n)+2n, x(j,2)+a\,(\mbox{mod }2n)+4n\}  \mbox{ where } (0, j, x(j,2)) = SB_j.
\end{eqnarray*}
  Then
 \begin{eqnarray*}
O_{j} &=&  \{B_{ja},  0\leq a\leq 2n-1\}
   \end{eqnarray*}
for $j \in {\Bbb Z}_{2n}\setminus \{n\}.$
  Note that the starter block $(0,n,0)$ and the corresponding orbit have been omitted.
  Finally we take the union of the orbits   to obtain the blocks of a block design.
Recall that  Property P2 of the difference covering array implies that for each $g, g' \in \{0, \ldots, k-1\}$ if there exists $j, j'\in {\Bbb Z}_{2n}\setminus\{n\}$ such that $x(j,g) - x(j, g^\prime) = x(j^\prime,g) - x(j^\prime, g^\prime)$, then $j=j^\prime$. Thus the set
\begin{eqnarray*}
{\cal B}=\cup_{j\in {\Bbb Z}_{2n}\setminus\{n\}}O_{j}
  \end{eqnarray*}
  is a set of  $4n^2-2n$ $3$-subsets (blocks) of ${\Bbb Z}_{6n} = V$ with the property that pairs $y,z\in {\Bbb Z}_{6n}$ occur together in $\lambda_{y,z}$ blocks where
\begin{eqnarray*}
\lambda_{y,z}&=&\left\{
\begin{array}{ll}
0, & \mbox{if }y,z\in\{0,\dots, 2n-1\};\ y,z\in\{2n,\dots, 4n-1\};\ y,z\in\{4n,\dots, 6n-1\},\\
0, & \mbox{if }z=y+3n\mbox{ or } z=y+4n\mbox{ and } y\in\{0,\dots, 2n-1\},\\
0, & \mbox{if }z=y+3n\mbox{ and }y\in\{2n,\dots, 3n-1\},\\
0, & \mbox{if }z=y+n\mbox{ and } y\in\{3n,\dots, 4n-1\},\\
1, &\mbox{ otherwise}.
\end{array}\right.
\end{eqnarray*}
Further each orbit defines a parallel class.
Hence $(Z_{6n},{\cal B})$ is a PBIBD$(6n,3)$ with $\lambda \leq 1$ that is resolvable into $2n-1$ parallel classes.

We now follow the work of Vasic and Milenkovic \cite{VasMil04} and construct an incidence matrix $H=[h(i,j)]$ where the columns are indexed by the blocks  $B_{ja}\in{\cal B}$ and set
\begin{eqnarray}
h(i,ja)&=&\left\{
\begin{array}{ll}
1,&\mbox{if }i\in B_{ja},\\
0,&\mbox{otherwise}.
\end{array}\right.\label{IM}
\end{eqnarray}

In Section \ref{LI} we will identify a set of $6n-2$ columns of $H$ that are linearly independent and then use this information to obtain the dimension of the null space and hence  the number of codewords in the associated code. This will enable us to give an explicit algebraic expression for the rate of the code, something which has not been possible with former constructions using BIBDs (see for example \cite{VasMil04}). Further we will show that there always exists a set of $6$ linearly dependent columns in $H$ but no set of $5$ or less linearly dependent columns when $n$ is odd. Thus the distance of the code is $6$ when $n$ is odd and $4$ when $n$ is even.

But before we do this we draw the connection with the above PBIBD and pseudo orthogonal Latin squares. This connection  will aide the reader in the proofs given in Section \ref{LI}.

\section{Pseudo-orthogonal Latin squares}

A {\em Latin square} of order $m$  is an $m\times m$ array  in which each of the symbols of ${\mathbb Z}_m$ occurs once in every row and once in every column.
 Two Latin squares $A=[a(i,j)]$ and $B=[b(i,j)]$, of order $m$, are said to be {\em orthogonal} if
$$
O=\{(a(i,j),b(i,j))\mid 0\leq i,j\leq m-1\}={\mathbb Z}_m\times {\mathbb Z}_m.
$$
A set of $t$ Latin squares of order $m$ are said to be {\em mutually orthogonal}, $t$-MOLS$(m)$, if they are pairwise orthogonal.
It is well known that difference matrices can be used to construct sets of mutually orthogonal Latin squares, see for instance  \cite[Lemma 6.12]{HSS}.

To obtain a broader class of structures the orthogonality condition of MOLSs has been varied    to that of pseudo-orthogonal, see \cite{RSS} and  \cite{BB}. A pair of Latin squares, $A=[a(i,j)]$ and $B=[b(i,j)]$, of order $m$, is said to be {\em pseudo-orthogonal} if
given
$O=\{(a(i,j),b(i,j))\mid 0\leq i,j\leq m-1\}$,
 for all $a\in {\mathbb Z}_m$
$$
|\{(a,b(i,j))\mid (a,b(i,j))\in O\}|=m-1.
$$
That is, each symbol in $A$ is paired with every symbol in $B$ precisely once, except for one symbol with which it is paired twice and one symbol with which it is not paired at all. A set of $t$ Latin squares, of order $m$, are said to be mutually pseudo-orthogonal if they are pairwise pseudo-orthogonal.

The value and applicability  of pseudo-orthogonal Latin squares  has been established through applications to multi-factor crossover designs in animal husbandry \cite{BB}.  Mutually pseudo-orthogonal Latin squares can be constructed from  cyclic DCA$(k,2n;2n)$, see \cite{DDHKR,RSS}, an idea that is formalised in the next theorem.

 \begin{theorem}\cite{DDHKR}
If there exists a cyclic DCA$(k+1, 2n+1;2n)$, $Q=[q(i,j)]$, that satisfies  P1. and P2., then there exists a set of $k$ pseudo-orthogonal Latin squares of order $2n$.
\end{theorem}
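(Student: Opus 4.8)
The plan is to produce the $k$ squares explicitly by developing the $k$ non-trivial columns of the array over ${\mathbb Z}_{2n}$, and then to read pseudo-orthogonality straight off properties P1 and P2. First, we may take $Q=[q(i,j)]$ to be the $2n\times(k+1)$ array obtained from the given difference covering array by normalising so that the first row and first column are all-zero and then deleting that all-zero first row, as in Section~2; so $Q$ has rows labelled $0,\dots,2n-1$ and columns labelled $0,1,\dots,k$ and satisfies P1 and P2. Write $q_j(i)=q(i,j)$. By P1 the column $q_0$ is identically $0$, while each of $q_1,\dots,q_k$ is a permutation of ${\mathbb Z}_{2n}$.

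For each $j\in\{1,\dots,k\}$ I would define a $2n\times 2n$ array $M_j$, with rows indexed by $i\in\{0,\dots,2n-1\}$ and columns indexed by $a\in{\mathbb Z}_{2n}$, by
\[
M_j(i,a)=q_j(i)+a\pmod{2n}.
\]
Row $i$ of $M_j$ is $\{q_j(i)+a:a\in{\mathbb Z}_{2n}\}={\mathbb Z}_{2n}$, and column $a$ is $\{q_j(i)+a:0\le i\le 2n-1\}={\mathbb Z}_{2n}$ because $q_j$ is a permutation, so each $M_j$ is a Latin square of order $2n$. They are pairwise distinct, since $M_j=M_{j'}$ would force $q_j=q_{j'}$ and hence $\Delta_{j,j'}=\{0,\dots,0\}$, contradicting P2.

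The main step is to check that $M_j$ and $M_{j'}$ are pseudo-orthogonal for distinct $j,j'\in\{1,\dots,k\}$. Fix a symbol $u\in{\mathbb Z}_{2n}$. The cells of $M_j$ carrying $u$ are exactly $(i,\,u-q_j(i))$ for $0\le i\le 2n-1$, one in each row, and at such a cell $M_{j'}$ carries $q_{j'}(i)+(u-q_j(i))=u-(q_j(i)-q_{j'}(i))$. Hence the symbols of $M_{j'}$ appearing opposite $u$ form the list $\{u-d:d\in\Delta_{j,j'}\}$, where $\Delta_{j,j'}$ is the difference list of columns $j$ and $j'$. By P2, $\Delta_{j,j'}={\mathbb Z}_{2n}\setminus\{0\}$ as a set, so the distinct symbols opposite $u$ are precisely ${\mathbb Z}_{2n}\setminus\{u\}$, of which there are $2n-1$; as $u$ was arbitrary, this is exactly the defining condition of pseudo-orthogonality of $(M_j,M_{j'})$. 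Counting with multiplicity (there are $2n$ cells carrying $u$, one per row of $Q$, but only $2n-1$ distinct symbols available) recovers the familiar picture: $u$ is paired twice with one symbol of $M_{j'}$ and not at all with $u$, the doubled symbol being $u-n$ by the difference list recalled after P2. The relation is symmetric in $j,j'$ since $\Delta_{j',j}=-\Delta_{j,j'}$ again runs over ${\mathbb Z}_{2n}\setminus\{0\}$, so $\{M_1,\dots,M_k\}$ is a set of $k$ mutually pseudo-orthogonal Latin squares of order $2n$.

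Once the squares $M_j$ are written down the verification is essentially bookkeeping, so I do not foresee a genuine obstacle; the one point to take care over is the passage from P1 and P2 to the exact shape of $\Delta_{j,j'}$ --- that $0$ is absent and only $2n-1$ distinct values occur among the $2n$ entries --- since it is precisely this feature that yields pseudo-orthogonality rather than true orthogonality.
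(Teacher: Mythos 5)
Your construction is exactly the one the paper (and \cite{DDHKR}) uses: develop each non-zero column of the normalised array cyclically, i.e.\ $M_j(i,a)=q_j(i)+a \pmod{2n}$, and read the Latin property off P1 and pseudo-orthogonality off the difference list $\Delta_{j,j'}$ from P2. The verification is correct, including the key point that $\Delta_{j,j'}$ omits $0$ and has only $2n-1$ distinct values, so the proposal matches the paper's approach.
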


The construction takes each non-zero column of $Q$ as the first column of a $2n\times 2n$ array, with subsequent columns obtained by adding 1 modulo $2n$ to the entries in the previous column. Thus, the DCA$^*(3,2n;2n)$  $X=[x(j,g)]$ given in Equation \ref{DCA} gives the following associated pair of pseudo-orthogonal Latin squares of order $6$:

\begin{center}
\begin{tabular}{cc}
$Y=[y(i,j)]$&$Z=[z(i,j)]$\\
\begin{tabular}{|c|c|c|c|c|c|}
\hline
0&1&2&3&4&5\\
\hline
1&2&3&4&5&0\\
\hline
2&3&4&5&0&1\\
\hline
3&4&5&0&1&2\\
\hline
4&5&0&1&2&3\\
\hline
5&0&1&2&3&4\\
\hline
\end{tabular}&
\begin{tabular}{|c|c|c|c|c|c|}
\hline
1&2&3&4&5&0\\
\hline
3&4&5&0&1&2\\
\hline
5&0&1&2&3&4\\
\hline
0&1&2&3&4&5\\
\hline
2&3&4&5&0&1\\
\hline
4&5&0&1&2&3\\
\hline
\end{tabular}
\end{tabular}
\end{center}

If we rewrite these Latin squares in terms of group divisible designs (see \cite{colbourn} for definition) with sets of blocks

 \begin{eqnarray*}
O_{j} &=& \{ \{ a, y(j,a)+2n, z(j,a)+4n\}
\mid 0\leq a\leq 2n-1,a\neq n\},\ j\in {\Bbb Z}_{2n}\setminus\{n\}
   \end{eqnarray*}
 then we obtain a collection of $2n-1$ parallel classes forming a PBIBD with block size 3 and $\lambda$ equal to 0 or 1 and block set ${\cal B}=\cup_{j} O_j$ as before.

\section{LDPC Codes from PBIBDs}\label{LI}

In this section we show that the incidence matrix as defined in Equation \ref{IM} defines a $6n\times (4n^2-2n)$ parity-check matrix $H$ of an LPDC code with $\rho=2n-1$, $\gamma=3$, with no cycles of length less than 6, and rank $R=6n-2$. The dimension of the null space of $H$ is $4n^2-8n+2$ and the minimum distance $d=6$ when $n$ is odd and 4 when $n$ is even.

We start by showing that the girth of the Tanner graph is at least six.

\begin{lemma}
The LDPC code defined by the parity-check matrix $H$ given by Equation \ref{IM} has girth at least 6.
\end{lemma}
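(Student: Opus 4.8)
The plan is to reduce the claim to the RC-constraint on $H$ and then invoke the fact, recalled in the introduction, that an RC-constrained parity-check matrix has a Tanner graph with no cycle of length less than $6$. It is cleanest to argue directly on the bipartite graph: its vertex classes are the point set $V=\mathbb{Z}_{6n}$ and the block set $\mathcal{B}$, and an edge joins a point $i$ to a block $B_{ja}$ precisely when $i\in B_{ja}$. Since the graph is bipartite it has no cycles of odd length, so the only thing to rule out is a $4$-cycle.

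A $4$-cycle would consist of two distinct blocks $B_{ja},B_{j'a'}$ and two distinct points $y,z$ with $y,z\in B_{ja}\cap B_{j'a'}$; equivalently the pair $\{y,z\}$ would be contained in at least two blocks of $\mathcal{B}$. But it was established in the preceding section that $(\mathbb{Z}_{6n},\mathcal{B})$ is a PBIBD in which every pair $y,z$ satisfies $\lambda_{y,z}\leq 1$, so no such configuration exists. One should note in passing that the blocks $B_{ja}$ are pairwise distinct as subsets of $\mathbb{Z}_{6n}$: each block meets each of the three groups $\{0,\dots,2n-1\}$, $\{2n,\dots,4n-1\}$, $\{4n,\dots,6n-1\}$ in exactly one point, and reading off the three coordinates recovers $a$ and then $j$; this is what makes ``two distinct columns'' a genuine hypothesis. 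Hence the Tanner graph has girth at least $6$.

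Phrased in matrix language, the same computation says that the inner product of two distinct columns of $H$ equals $|B_{ja}\cap B_{j'a'}|\leq 1$ and the inner product of two distinct rows $i,i'$ equals the number of blocks containing both, namely $\lambda_{i,i'}\leq 1$; thus $H$ satisfies the RC-constraint, and the girth bound follows as in \cite{Zhang10}.

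I do not expect a real obstacle here: essentially all of the content has already been supplied, namely the verification in the previous section that the pair frequency $\lambda_{y,z}$ of $(\mathbb{Z}_{6n},\mathcal{B})$ never exceeds $1$ (which itself rests on Property P2 of the difference covering array $X$). The only points requiring a little care are the bookkeeping remarks above — that the construction really yields distinct blocks, so that a $4$-cycle involves two genuinely different columns, and that $\lambda\leq 1$ is what controls both the row pairs and the column pairs of $H$.
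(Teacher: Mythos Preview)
Your argument is correct and essentially identical to the paper's: both reduce the girth bound to the fact that $\lambda_{y,z}\leq 1$ forces any two blocks to share at most one point, which is exactly the RC-constraint. The paper additionally records the regularity parameters $\gamma=3$ and $\rho=2n-1$ in the same breath, while you are a bit more explicit about bipartiteness and block distinctness, but the core reasoning is the same.
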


\begin{proof}
Since each block of ${\cal B}$ has precisely 3 entries, every column of $H$ sums to $\gamma=3$. There are $2n-1$  parallel classes and each element of $V$ is contained in precisely one block of each parallel class. Thus each row of $H$ will sum to $\rho=2n-1$.  For any pair $y,z\in V,$ $\lambda_{y,z}\leq 1$ hence the inner product of any two rows of the incidence matrix is less than or equal to 1. If the inner product of any two columns is greater than or equal to 1 then there exists two blocks of $B$ which intersect in two or more elements but this contradicts the fact that  $\lambda_{y,z}\leq 1$ for all $y,z\in V$. Thus the Tanner graph has no cycles of length less than 6.
\end{proof}

Next we show that  the matrix $H$ as given in Equation \ref{IM} has rank exactly  $6n - 2$. Recall that if $A$ is a matrix with $m$ columns then $\mbox{rank}(A)+\mbox{nullity}(A)=m.\label{rank}$ Hence the proof of this part requires two steps - first we show in Lemma \ref{atmost} that the rank is at most $6n - 2$ by using the rows of the $H$, and then we give a set of $6n - 2$ columns that are linearly independent.

\begin{lemma}\label{atmost}
Working over ${\Bbb Z}_2$, the rank of $H$ is at most $6n - 2$.
\end{lemma}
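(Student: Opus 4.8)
The plan is to exhibit explicit linear dependencies among the rows of $H$ (over $\mathbb{Z}_2$), enough of them to force $\mathrm{rank}(H) \le 6n-2$. Since $H$ has $6n$ rows, it suffices to produce two independent relations among the rows, i.e.\ two distinct nonzero subsets $S_1, S_2 \subseteq V = \mathbb{Z}_{6n}$ such that for each block $B_{ja}\in\mathcal{B}$, the block meets $S_\ell$ in an even number of points ($\ell = 1,2$), and such that the corresponding $0/1$ indicator vectors are linearly independent over $\mathbb{Z}_2$. Because each block has exactly $3$ points, distributed one in each of the three ``parts'' $P_0 = \{0,\dots,2n-1\}$, $P_1 = \{2n,\dots,4n-1\}$, $P_2 = \{4n,\dots,6n-1\}$ (this is exactly how the $B_{ja}$ are built from the DCA $X$ and the pseudo-orthogonal pair $Y,Z$), a block meets a subset $S$ in an even number of points iff it meets $S$ in $0$ or $2$ points.

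The natural candidates come from the part structure and the resolvability. First, take $S_1 = P_0 \cup P_1$ (equivalently $S_1 = P_2^c$): every block has exactly one point in $P_0$, one in $P_1$, one in $P_2$, so $|B_{ja}\cap S_1| = 2$ for every block — an even intersection, hence a valid row relation: the sum over $\mathbb{Z}_2$ of the rows indexed by $P_0\cup P_1$ is zero. For a second, independent relation I would use a parallel class. Within each part, the blocks of a single orbit $O_j$ partition that part; so if $S_2$ is the union, over all blocks in $O_j$, of (say) the $P_0$-point of the block — that is just $P_0$ again — that only recovers something in the span of the first type. A cleaner second relation: note that since $O_j$ is a parallel class covering all of $V$ once, and each block of $O_j$ meets $S_1$ evenly, there is no new information there. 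Instead I would compare two parts directly: let $S_2 = P_1 \cup P_2 = P_0^c$. Again every block meets $S_2$ in exactly $2$ points, so $\sum_{i\in P_1\cup P_2} (\text{row } i) = 0$ over $\mathbb{Z}_2$. The indicator vectors of $P_2^c$ and $P_0^c$ are linearly independent over $\mathbb{Z}_2$ (their sum is the indicator of $P_0\triangle P_2 = P_0\cup P_2$, which is nonzero, and neither is zero), so these are two independent relations among the rows of $H$, giving $\mathrm{rank}(H)\le 6n-2$.

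The step I expect to require the most care is verifying that these two relations are genuinely independent as relations, i.e.\ that no nontrivial combination of them is the trivial relation, and — more importantly — making sure I have not overcounted: one must check the two subsets $P_2^c$ and $P_0^c$ are distinct and that their difference is not itself the empty relation, which is immediate here since $n\ge 2$ means all three parts are nonempty. A subtlety worth a sentence: the argument uses only that the design is resolvable with block size $3$ and three ``groups'' of equal size (a transversal-type structure), which is exactly what the construction via the pseudo-orthogonal Latin squares $Y, Z$ guarantees — each block $\{a,\ y(j,a)+2n,\ z(j,a)+4n\}$ picks out one symbol from each group. So the bound $\mathrm{rank}(H) \le 6n - 2$ follows purely from this group structure, with no need to invoke properties P1, P2 of the DCA beyond what was already used to build $\mathcal{B}$. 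The matching lower bound $\mathrm{rank}(H)\ge 6n-2$ is deferred to the promised exhibition of $6n-2$ linearly independent columns.
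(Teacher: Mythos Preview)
Your argument is correct and is essentially the paper's own proof: the paper exhibits the same three row-sums $R_{1,2}=P_0\cup P_1$, $R_{2,3}=P_1\cup P_2$, $R_{1,3}=P_0\cup P_2$, each summing to zero over ${\Bbb Z}_2$ because every block contributes exactly two $1$'s to those rows, and concludes the rank drops by at least two. If anything, your version is slightly cleaner in that you explicitly verify the two chosen relations are linearly independent (the paper's ``we require at least two of these sets to cover all rows'' is doing the same work more informally); the digression about parallel classes can be cut, since it is not used.
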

\begin{proof}

Since the size of $H$ is $6n\times (4n^2-2n)$ the rank of $H$ is at most $6n$. First we will find two linearly dependent rows in the parity check matrix showing that the rank of $H$ is at most $6n-2$. Remember that each block contains exactly one element from each of the sets $\{0,...,2n-1\}$, $\{2n,...,4n-1\}$ and $\{4n,...,6n-1\}$. The set of rows $R_{1,2}=\{0,...,4n-1\}$, $R_{2,3}=\{2n,...,6n-1\}$ and $R_{1,3}=\{0,...,2n-1,4n,...,6n-1\}$  are each linearly dependent since the column sum of $H$ restricted to any of these sets of rows is $2$. Further,  for each of these sets any row can be written as the sum of the other  $4n-1$ rows and we require at least two of these sets to cover all rows, thus there are at least 2 linearly dependent rows.
\end{proof}

Next we need to show that $H$ contains a linearly independent set of columns (blocks) of size  $6n-2$, giving the rank of $H$ as exactly $6n-2$. We give the proof for $n\geq 6$, though the proof follows similarly for smaller sizes, which can be seen by direct calculation.
\vspace{3mm}

Let ${\cal I} =C_1\cup C_2 \cup C_3 \cup C_4$ be the set of blocks where
\begin{eqnarray*}
C_1&=&\{\{a,2n+a,4n+(1+a \mbox{ mod } 2n)\}\mid a=0,\dots, 2n-1\},\\
C_2&=&\{\{b,2n+1+b,4n+(3+b \mbox{ mod } 2n)\}\mid b=0,\dots, 2n-2\},\\
C_3&=&\{\{c,2n+2+c,4n+(5+c \mbox{ mod } 2n)\}\mid c=0,\dots, 2n-4\},\\
C_4&=&\{\{0,2n+n+1,4n+2\},\{1,2n+n+2,4n+3\}\}.\\
\end{eqnarray*}
Let $E=\{x\in B \mid  B\in {\cal I}\}$. Note that elements of $E$ have replication number $r=1,2,3,4$ in the blocks of ${\cal I}$.
The following table categorizes the elements of $E$ according to their replication number, for
$n \geq 6$. Note that the replication numbers are only very slightly different for $n =3, 4,5$.

\begin{center}
\begin{tabular}{l|l|l|l}
\multicolumn{4}{c}{Replication Numbers for $n \geq 5$}\\
\hline
 1 & 2 & 3 & 4\\
 \hline
 $2n-1, 2n.$ & $2n-3,2n-2,$  & $2,\dots,2n-4,$    & $0,1$\\
                & $2n+1,4n-1,$ & $2n+2,\dots,3n,$ & $3n+1, 3n+2.$\\
                &  $4n+2, 4n+4$&$3n+3, \dots, 4n-2,$ &\\
                &                     &$4n,4n+1,4n+3,$ &                    \\
                &                     & $4n+5, \dots, 6n-1$.&                 \\
        \hline
\end{tabular}
\end{center}

\begin{example}
The block sets $C_1, C_2, C_3$ and $C_4$ are given for $n = 6$. The elements of the blocks are arranged in different rows to make it easy to see the repetition in the elements. Each block is read off by reading corresponding cells in groups of rows. For example, block $\{0,12,25\} \in C_1$ is read from rows 1, 5 and 9.
\[ \begin{array}{lllllllllllll}
C_1 & 0 & 1 & 2 & 3 & 4 & 5 & 6 & 7 & 8 & 9&10&11 \\
C_2 & 0 & 1 & 2 & 3 & 4 & 5 & 6 & 7 & 8 & 9 &10& \\
C_3 & 0 & 1 & 2 & 3 & 4 & 5 & 6 &7  &8  &&& \\
C_4 & 0 & 1 &&&&&&&&&&\\\hline
C_1 &  12 & 13 & 14 & 15 & 16 & 17 & 18 & 19 & 20 & 21 & 22 & 23\\
C_2 &  13 & 14 & 15 & 16 & 17 & 18 & 19 & 20 & 21 & 22 & 23 &    \\
C_3 &  14 & 15 & 16 & 17 & 18 & 19 & 20 & 21 & 22 &      &    &\\
C_4  & 19 & 20 &&&&&&&&&&\\\hline
C_1 &  25 & 26 & 27 & 28 & 29 & 30 & 31 & 32 & 33 & 34 & 35 & 24\\
C_2 &  27 & 28 & 29 & 30 & 31 & 32 & 33 & 34 & 35 & 24 & 25 &    \\
C_3 & 29 & 30 & 31 & 32 & 33 & 34 & 35 & 24 & 25 &       &    &\\
C_4  & 26 & 27 &&&&&&&&&&\\\hline
\end{array}
\]
\end{example}

We claim that the set of blocks ${\cal I}$ correspond to a linearly independent set of columns of $H$, giving the rank of $H$ as exactly $6n -2$.  We prove this result in Proposition \ref{LinDep} where we show that there is no subset of $\cal{I}$ corresponds to a  linearly dependent set of columns.

\begin{proposition}\label{LinDep}
Define $H$ and ${\cal I}$ as above. The columns of $H$ corresponding to  the blocks of ${\cal I}$ are a linearly independent set of vectors over ${\Bbb Z}_2^{6n}$, implying the rank of $H$ is exactly $6n - 2$.
\end{proposition}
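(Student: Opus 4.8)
The plan is to prove the contrapositive-flavoured statement that no nonempty sub-collection ${\cal S}\subseteq{\cal I}$ is an \emph{even cover}, i.e.\ has the property that every point of $V$ lies in an even number of blocks of ${\cal S}$. Over ${\Bbb Z}_2$ the sum of the columns of $H$ indexed by ${\cal S}$ has $i$-th coordinate $\sum_{B\in{\cal S}}[\,i\in B\,]\bmod 2$, so this sum is the zero vector exactly when ${\cal S}$ is an even cover; since a set of $0$--$1$ vectors over ${\Bbb Z}_2$ is linearly dependent precisely when some nonempty sub-collection sums to $0$, showing ${\cal I}$ carries no even cover is the same as showing its columns are independent. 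The tool is a peeling argument: if some point $p$ lies in exactly one block $B$ of the current collection, then $B$ cannot belong to any even cover (else $p$ would be covered an odd number of times), so $B$ may be discarded; one then recomputes the point replication numbers and repeats. If this process empties ${\cal I}$, we are done.

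First I would start the cascade from the two replication-number-$1$ points identified in the table. The point $2n-1$ lies only in the $C_1$-block $\{2n-1,\,4n-1,\,4n\}$ and the point $2n$ lies only in the $C_1$-block $\{0,\,2n,\,4n+1\}$, so both of these blocks must be discarded. Removing them drops the replication number of the middle-band point $4n-1$ to $1$, forcing out a $C_2$-block; that removal drops a first-band point and a third-band point to replication $1$, forcing out a further $C_1$-block and a $C_3$-block, and so on. I expect to show the cascade sweeps monotonically: the $C_1$-blocks leave in order of decreasing index $a$ (together with the block $a=0$), the $C_2$-blocks in decreasing index $b$, and the $C_3$-blocks in decreasing index $c$, each removal in one family triggering the next, while the two exceptional $C_4$-blocks drop out when the relevant middle-band points ($3n+1$, then $3n+2$) reach replication $1$. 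After $6n-2$ steps the whole of ${\cal I}$ has been peeled away.

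The hard part — and the bulk of the write-up — will be verifying that the cascade never stalls, i.e.\ that as long as some block of ${\cal I}$ survives, some point of $V$ has replication number $1$ in the surviving collection. This is an induction tracking the replication numbers of the (at most three) points lying on each partially-deleted diagonal, and it requires care with the modular wrap-around hidden in the shifts $+1,+3,+5$ defining $C_1,C_2,C_3$, and with the placement of the $C_4$-blocks so that they are neither stranded nor peeled prematurely. The minor discrepancies in the replication table for $n=3,4,5$ alter only a few initial steps of the cascade and are handled by direct computation, as already noted before the statement.

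Finally, ${\cal I}$ consists of $|C_1|+|C_2|+|C_3|+|C_4|=2n+(2n-1)+(2n-3)+2=6n-2$ blocks, whose columns have then been shown linearly independent, so $\mathrm{rank}(H)\ge 6n-2$; combined with Lemma~\ref{atmost} this yields $\mathrm{rank}(H)=6n-2$ exactly, which is the assertion of the proposition.
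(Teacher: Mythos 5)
Your reduction of linear dependence to the existence of a nonempty even cover, and your opening peeling steps (discarding the two $C_1$-blocks $\{0,2n,4n+1\}$ and $\{2n-1,4n-1,4n\}$ through the replication-one points $2n$ and $2n-1$, then cascading through $C_2$, $C_1$, $C_3$), coincide exactly with the first half of the paper's argument. But the claim on which your whole plan rests --- that the cascade never stalls, so that peeling alone empties ${\cal I}$ --- is false. The paper's own computation shows that the cascade terminates at the nonempty sub-collection ${\cal I}^\prime=C_1^\prime\cup C_2^\prime\cup C_3^\prime\cup C_4$ (about $3n+5$ blocks, with $a=1,\dots,n+1$, $b=0,\dots,n+1$, $c=0,\dots,n-1$), and the accompanying replication table records that \emph{no} point of $E^\prime$ has replication number $1$: every surviving point lies in $2$, $3$ or $4$ surviving blocks. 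Since the stalled state of this deletion process is the $2$-core of the block collection and is independent of the order in which you peel, no reordering or cleverer scheduling of the removals (including trying to bring the $C_4$-blocks in earlier) can avoid this; ``the hard part'' you defer is not merely hard, it is impossible as stated.

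What is needed beyond the stall is an argument that uses the parity condition positively, i.e.\ forces blocks \emph{into} the putative even cover ${\cal D}$, not only out of it. This is what the paper does: it splits into four cases according to which of the two $C_4$-blocks $\{0,3n+1,4n+2\}$ and $\{1,3n+2,4n+3\}$ lie in ${\cal D}$; in each case the even-cover condition at specific points forces membership of blocks such as $\{1,2n+1,4n+2\}$ and $\{0,2n+1,4n+3\}$, and this forcing propagates along the diagonals (the blocks indexed by $e=0,\dots,n-2$), until in every case some point of $E^\prime$ is left covered exactly once --- a contradiction. Your write-up would need to supply this second, case-based forcing argument (or some substitute for it); without it the proof is incomplete at precisely the point where the real work lies.
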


  \begin{proof}
Let ${\cal D}\subset {\cal I}$, with  $D$ representing to corresponding set of columns in $H$. We will assume that $D$ is a linearly dependent set and show that this leads to a contradiction. The assumption that $D$ is a linearly dependent set implies that any row $y$ restricted to the  columns of $D$  contain 0 or  an even number of 1s.

Two blocks $\{0,2n,4n+1\}$ and $\{2n-1, 4n-1,4n\}$ in $C_1$ ($a = 0$ and $a = n-1$), respectively, contain the entries  $2n-1$ and $2n$ with replication number 1 in ${\cal I}$. This precludes these two blocks from being in ${\cal D}$. This immediately results in $\{2n-2, 4n-1, 4n+1\}$ in $ C_2$  ($b = 2n-2$) being excluded from ${\cal D}$  since point $ 4n - 1$ would have replication number 1 in ${\cal D}$ and likewise $\{2n-2, 2n+ (2n-2), 4n + (1 + 2n-2\, \mbox{ mod } 2n) \}$ of $C_1$ ($a = 2n-2$)  being excluded from ${\cal D}$.

Proceeding in this manner, we sequentially take the triple of values $c,\ b,\ a$, for $c = 2n - 4, 2n-5, \dots, n; \hspace{3mm}  b = 2n-3, 2n-4, \dots, n+2; \hspace{3mm}   a = 2n-3, 2n-4, \dots, n+2$, and exclude from ${\cal D}$ the corresponding blocks   from $C_3$,  $C_2$, and $C_1$ respectively.

Thus if $D$ exists, then ${\cal D}$ must be a subset of ${\cal I}^\prime = C_1^\prime \cup  C_2^\prime \cup C_3^\prime \cup C_4$, where
\begin{eqnarray*}
C_1^\prime&=&\{\{a,2n+a,4n+(1+a \mbox{ mod } 2n)\}\mid a=1,\dots, n+1\},\\
C_2^\prime&=&\{\{b,2n+1+b,4n+(3+b \mbox{ mod } 2n)\}\mid b=0,\dots, n+1\},\\
C_3^\prime&=&\{\{c,2n+2+c,4n+(5+c \mbox{ mod } 2n)\}\mid c=0,\dots, n-1\},\\
C_4&=&\{\{0,2n+n+1,4n+2\},\{1,2n+n+2,4n+3\}\},\\
\end{eqnarray*}
where $C_i^\prime \subset C_i$ for $i = 1, \dots, 3$. Let $E^\prime =\{x\in B \mid  B\in {\cal I}^\prime\}$. The replication numbers of $x \in E^\prime$ for $n \geq 6$ are as in the table below. Note that there are no singly occuring elements in $E^\prime$.

\begin{center}
\begin{tabular}{l|l|l}
\multicolumn{3}{c}
{Replication Number}\\
\hline
 2 & 3 & 4\\
 \hline
$n, n+1,$&$0, 2, 3, \dots, n - 1,$&$1,$\\
 $2n + 1, 3n+2,$      & $2n+2,\dots, 3n,$&$3n+1$\\
 $4n+2, 4n+4, 5n+3, 5n+4$&$4n+3,4n+5,\dots, 5n+2$  &\\
       \hline
\end{tabular}
\end{center}

We next argue the following four cases: Case 1, both $\{0,3n+1,4n+2\},\{1,3n+2,4n+3\}\in {\cal D}$; Case 2,  $\{0,3n+1,4n+2\}\in {\cal D}$ and $\{1,3n+2,4n+3\}\notin {\cal D}$;  Case 3, $\{1,3n+2,4n+3\}\in {\cal D}$ and $\{0,3n+1,4n+2\}\notin {\cal D}$;  Case 4, $\{0,3n+1,4n+2\},\{1,3n+2,4n+3\}\notin {\cal D}$, and show that every case leads to a contradiction.

{\bf Case 1}: Suppose that both $\{0,3n+1,4n+2\},\{1,3n+2,4n+3\}\in {\cal D}$. This assumption implies $\{1,2n+1,4n+2\},\{(0,2n+1,4n+3\}\in {\cal D}$. This results in
$\{e,2n+2+e,4n+5+e\},\{e+2,2n+2+e,4n+3+e\},\{e+1,2n+2+e,4n+4+e\}\notin {\cal D}$ consecutively, for $e=0,\dots, n-2$.

This leaves only the entries $\{n-1,3n+1,5n+4\},\{n,3n+1,5n+3\},\{n+1,3n+1,5n+2\},\{n+1,3n+2,5n+4\}$.
If $D$ is to be a dependent set, the block $\{n+1,3n+2,5n+4\}$ as well as one of the other three blocks must be in ${\cal D}$.
But no matter which of the three blocks is chosen it results in one of $n, n-1, n+1, 5n+ 2, 5n+3, 5n+4 \in E^\prime$ occurring in a single block in ${\cal D}$, a contradiction.

 {\bf Case 2}: Suppose $\{0,3n+1,4n+2\}\in {\cal D}$ but $\{1,3n+2,4n+3\}\notin {\cal D}$. This implies  $\{1,2n+1,4n+2\},\{0,2n+1,4n+3\}\in {\cal D}$ and subsequently that $\{0,2n+2,4n+5\}\notin {\cal D}$. Then for $e=0,\dots, n-2$, $\{e+2,2n+2+e,4n+3+e\},\{e+1,2n+2+e,4n+4+e\}\in {\cal D}$ and $\{e+1,2n+3+e,4n+6+e\}\notin {\cal D}$.

This only leaves the blocks $\{n,3n+1,5n+3\}, \{n+1,3n+1,5n+2\},\{n+1,3n+2,5n+4\}$. Now $ \{n+1,3n+2,5n+4\}$ cannot be in ${\cal D}$ as $3n+2 \in E^\prime$ would then occur in only one block in ${\cal D}$. To get a matching pair to the block containing $3n + 1 \mbox{ in } {\cal D}$, we need only one of the other two blocks. But whichever choice we make results in one of $n, n+1, 5n+3 \in E^\prime$ occurring in only
 one block of ${\cal D}$, a contradiction to $D$ being linearly dependent.

Finally reviewing the blocks that are in ${\cal D}$ we see that $5n+2\in E^\prime$ now occurs in only one block of ${\cal D}$ which is also a contradiction. Hence $D$ cannot be linearly dependent.

{\bf Case 3}: Suppose $\{1,3n+2,4n+3\}\in {\cal D}$  but $\{0,3n+1,4n+2\}\notin {\cal D}$. Then $\{1,2n+1,4n+2\}\notin {\cal D}$ and consequently $\{0,2n+1,4n+3\}\notin {\cal D}$ and $\{0,2n+2,4n+5\}\notin {\cal D}$. Then, arguing similarly to case 2,   $\{e+2,2n+2+e,4n+3+e\},\{e+1,2n+2+e,4n+4+e\}\in D$ and $\{e+1,2n+3+e,4n+6+e\}\notin D$ for $e=0,\dots, n-2$, and the result follows.

   {\bf  Case 4}: Suppose that both $\{0,3n+1,4n+2\},\{1,3n+2,4n+3\}\notin {\cal D}$. Then it follows that $\{1,2n+1,4n+2\}\notin {\cal D}$ and so $\{0,2n+1,4n+3\}\notin {\cal D}$ and  for $e=0,\dots, n-2$, this implies
$\{e,2n+2+e,4n+5+e\},\{e+2,2n+2+e,4n+3+e\},\{e+1,2n+2+e,4n+4+e\}\notin {\cal D}$, reducing as in the previous cases to a contradiction.

Thus the columns of $H$ that correspond to ${\cal I}$ form a  linearly independent set of vectors and the rank of $H$ is $6n - 2$.
\end{proof}

Using this information we see that the rate of the code  is $ (b-\mbox{rank}(H))/b = (4n^2-8n+2)/(4n^2-2n)$
and that $\mbox{nullity}(H) =  4n^2-8n+2$, which is the number of codewords in the LDPC code. The table below lists the rate of the code for $6 \leq n \leq 15$. The codes constructed here achieve high rate for much shorter lengths than previous LDPC codes obtained by combinatorial constructions. 

\begin{center}
\begin{tabular}{|l|p{2.25cm}|p{2cm}|p{1.5cm}||l|p{2.25cm}|p{2cm}|p{1.5cm}|}
$n$&Code length & Code dim   & Rate of  & $n$&Code length & Code dim   & Rate of\\
   & $ 4n^2 -2n$       &  $4n^2 - 8n +2$ & code & & $ 4n^2 -2n$       &  $4n^2 - 8n +2$ & code\\\hline
6 & 132 & 98 & 0.742&11& 462 & 398 & 0.861\\
7 & 182 & 142 & 0.780 & 12 & 552 & 482 & 0.873 \\ 
8 & 240 & 194 & 0.808& 13 & 650 & 574 & 0.883 \\
9& 306& 254 & 0.830& 14 & 756 & 674 & 0.891  \\
10 & 380 & 322 & 0.847 & 15 & 870 & 782 & 0.899\\
\end{tabular}
\end{center}

Finally we show that, when $n$ is odd the minimum distance of the code is 6, by showing that there exists 6 columns of $H$ that are linearly dependent, but no 5 columns are linearly dependent. From  \cite[Theorem 3.1]{VasMil04} the LDPC code with parity-check matrix $H$ as given in Equation \ref{IM} has minimum distance $d \leq 6$. The next lemma establishes the exact value of $d$ for the given parity-check matrix $H$.

\begin{lemma}
The LDPC code with parity-check matrix $H$ as given by Equation \ref{IM} has minimum distance 6 when $n$ is odd, and 4 when $n$ is even.
\end{lemma}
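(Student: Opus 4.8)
The plan is to reduce the statement, in two steps, to a short modular computation. First, a codeword of weight $w$ is a set of $w$ columns of $H$, i.e.\ $w$ blocks of ${\cal B}$, whose union meets every point of $V$ an even number of times; since $\lambda_{y,z}\le1$, two distinct blocks meet in at most one point, so each point lies in $0$ or $2$ of these blocks. Counting point--block incidences gives $3w$ even, so every codeword has even weight; there is no codeword of weight $1$ (the columns are nonzero), of weight $2$ (distinct blocks have distinct supports), or of weight $3$ ($9$ incidences is odd). Hence $d$ is even and $d\ge4$, and with $d\le6$ from \cite[Theorem 3.1]{VasMil04} we get $d\in\{4,6\}$. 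Moreover, four blocks covering six points twice each must pairwise meet in exactly one point with those six points all distinct, i.e.\ they form a Pasch configuration in $({\Bbb Z}_{6n},{\cal B})$. So the Lemma is equivalent to the assertion that $({\Bbb Z}_{6n},{\cal B})$ contains a Pasch configuration if and only if $n$ is even.

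For the second step I would translate a Pasch configuration into equations using the band structure. Recall $B_{ja}$ has one point in each band $\{0,\dots,2n-1\}$, $\{2n,\dots,4n-1\}$, $\{4n,\dots,6n-1\}$, with band-coordinates $a$, $(j+a)\bmod 2n$, $(x(j,2)+a)\bmod 2n$. In a Pasch configuration each band carries exactly two of the six points, so two of the four blocks have first band-coordinate $a$ and two have $a'\ne a$; write them $B_{j_1a},B_{j_2a},B_{j_3a'},B_{j_4a'}$ with $j_1\ne j_2$. Pairing up the band-$2$ coordinates forces, after possibly renaming $j_3,j_4$, that $j_3\equiv j_1-\delta$ and $j_4\equiv j_2-\delta$ with $\delta=a'-a\not\equiv0$; pairing up the band-$3$ coordinates must then be \emph{transversal}, $x(j_1,2)+a\equiv x(j_4,2)+a'$ and $x(j_2,2)+a\equiv x(j_3,2)+a'$, for otherwise two of the four blocks would share two points, contradicting $\lambda_{y,z}\le1$. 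Using the closed form $x(j,2)\equiv 2j+1-\chi(j)\pmod{2n}$ (with $\chi(j)=0$ for $0\le j\le n-1$ and $\chi(j)=1$ otherwise), which is immediate from Equation \ref{DCA}, the two transversal relations become
\[
2(j_1-j_2)+\delta\equiv\chi(j_1)-\chi(j_2-\delta),\qquad 2(j_2-j_1)+\delta\equiv\chi(j_2)-\chi(j_1-\delta)\pmod{2n}.
\]
Adding them, the right-hand side lies in $\{-2,-1,0,1,2\}$ while the left-hand side is $2\delta$; as $2\delta$ is even this forces $\delta\in\{1,n-1,n,n+1,2n-1\}$, and for each such $\delta$ the ranges of $\chi$ force either a forbidden coincidence among $j_1,\dots,j_4$ or the relation $2(j_1-j_2)\equiv n\pmod{2n}$. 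The latter is impossible when $n$ is odd and solvable when $n$ is even, which settles both directions: $d=6$ for $n$ odd, while for $n$ even one reads off a weight-$4$ codeword explicitly (for instance from $\delta=n$, $a=0$, $a'=n$, $j_1=n-1$, $j_2=\tfrac{3n}{2}-1$, with the case $n=2$ checked directly), giving $d=4$.

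I expect the \emph{main obstacle} to be making this reduction airtight rather than the arithmetic itself: one must verify that it captures \emph{every} weight-$4$ codeword --- that the band-$3$ pairing is genuinely forced to be transversal, that the four blocks are distinct, and that the excluded starter index $j=n$ never intrudes --- and one must produce a Pasch configuration valid uniformly for all even $n$ rather than checking parameter values one at a time. As a by-product, the same framework lets one exhibit for every $n$ six blocks meeting nine points twice each (e.g.\ a $3\times3$ array of points whose three rows and three columns are all blocks of ${\cal B}$), which re-proves $d\le6$ without appealing to \cite[Theorem 3.1]{VasMil04}.
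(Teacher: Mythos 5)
Your proposal is correct and follows essentially the same route as the paper: reduce any putative codeword of weight at most $5$ to a four-block (Pasch-type) configuration, use the two-branch closed form of $x(j,2)$ to derive congruences modulo $2n$ (forcing $2\cdot(\text{integer})\equiv n$, impossible for $n$ odd), and exhibit an explicit four-block dependency when $n$ is even --- the paper likewise invokes \cite[Theorem 3.1]{VasMil04} for $d\le 6$ (and also writes out a weight-6 codeword), and its Cases 1)--4) with Subcases a)--d) are just your congruence analysis organized as a table rather than via summing the two transversal relations to pin $\delta\in\{1,n-1,n,n+1,2n-1\}$. The only point needing one extra line is your claim that each point of a weight-4 support lies in $0$ or $2$ of the four blocks: a point lying in all four would leave the remaining eight points (distinct, since $\lambda_{y,z}\le 1$) each covered once, an odd number, which is the argument to add.
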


\begin{proof}

It is also easy to see that the set of columns of $H$ corresponding to the blocks
\begin{eqnarray*}
\begin{array}{ll}
\{0,2n+1,4n+3\} & \{0,2n+2,4n+5\}\\
\{1,2n+2,4n+4\} & \{1, 3n+2,4n+3\}\\
\{2n-1,2n+1,4n+4\} &\{2n-1,3n+2,4n+5\}
\end{array}
\end{eqnarray*}
is a linearly dependent set of columns. Thus there exists a codeword of weight $6$.

Now assume that there exists a set $D$ of $5$ or less linearly dependent columns with corresponding set of blocks denoted ${\cal D}$. The sum of any row of $H$ restricted to the columns of $D$  must be even, thus $D$ is made up of precisely four linearly dependent columns and consequently ${\cal D}=\{\{0,y+2n,x(y,2)+4n\},\{0,z+2n,x(z,2)+4n\},\{c,y+2n,x(z,2)+4n\},\{c,z+2n,x(y,2)+4n\}\}$, $1\leq c\leq 2n-1$ and $y,z\in \{0,1,\dots, 2n-1\}$.

When $n$ is even the set of columns of $H$ corresponding to the following blocks is a linearly dependent set:
\begin{eqnarray*}
\begin{array}{llll}
\{0,2n,4n+1\}, & \{0,2n+n/2,5n+1\}, & \{n-1, 2n, 5n+1\}, & \{n-1, 2n+ n/2, 4n+1\}.
\end{array}
\end{eqnarray*}

Thus assume that $n$ is odd.

The orbit structure  of the PBIBD implies the elements of a block of ${\cal D}$ satisfy  one of two equations; that is, if  $\{c,y+2n,x(z,2)+4n\}\in{\cal D}$, then
\begin{eqnarray}\label{constraint}
x(z,2)\equiv 2y+1-c\,(\mbox{mod }2n)\mbox{ or }x(z,2)\equiv 2(y-n)-c\,(\mbox{mod }2n).
\end{eqnarray}

The argument is now split into four cases 1), 2), 3) and 4) as set out in Table 1, and  in each case applying Equations \eqref{constraint} leads to four subcases a), b), c) and d) as set out in Table 2.
\begin{center}
\begin{tabular}{cc}
Table 1& Table 2\\
\begin{tabular}{|r|l|l|}
\hline
Case &$x(y,2)$&$x(z,2)$\\
\hline
1)&$2y+1$&$2z+1$\\
\hline
2)&$2y+1$&$2(z-n)$\\
\hline
3)&$2(y-n)$&$2z+1$\\
\hline
4)&$2(y-n)$&$2(z-n)$\\
\hline
\end{tabular}&
\begin{tabular}{|r|l|l|}
\hline
Subcase &$x(z,2)$&$x(y,2)$\\
\hline
a)&$2y+1-c\,(\mbox{mod }2n)$&$2z+1-c\,(\mbox{mod }2n)$\\
\hline
b)&$2y+1-c\,(\mbox{mod }2n)$&$2(z-n)-c\,(\mbox{mod }2n)$\\
\hline
c)&$2(y-n)-c\,(\mbox{mod }2n)$&$2z+1-c\,(\mbox{mod }2n)$\\
\hline
d)&$2(y-n)-c\,(\mbox{mod }2n)$&$2(z-n)-c\,(\mbox{mod }2n)$\\
\hline
\end{tabular}
\end{tabular}
\end{center}
Without loss of generality we may assume $0\leq y<z<2n-1$. Hence Case $3)$ becomes redundant.

For Case 1), we have $x(y,2)=2y+1$ and $x(z,2)=2z+1$, implying $0\leq y<z\leq n-1$. Hence $0<z-y\leq n-1$ and
\begin{eqnarray}\label{4n}
0<4(z-y)<4n.
\end{eqnarray}
In  Subcase 1a) we equate terms and subtract Equations \eqref{first1a} and \eqref{second1a}  to obtain Equation \eqref{third1a}:
\begin{eqnarray}
x(z,2)=2z+1&\equiv& 2y+1-c\,(\mbox{mod }2n)\label{first1a}\\
x(y,2)=2y+1&\equiv& 2z+1-c\,(\mbox{mod }2n)\label{second1a},\\
x(z,2)-x(y,2)=2(z-y)&\equiv& 2(y-z)\,(\mbox{mod }2n)\label{third1a},\\
4(z-y)&\equiv& 0\,(\mbox{mod }2n)
\end{eqnarray}
Hence $4(z-y)=2n$\ by Equation (\ref{4n}).  Leading to a contradiction, as $n$ is odd. This same argument applies in Subcase 1d).

Using similar arguments in
Subcase 1b):
\begin{eqnarray*}
x(z,2)=2z+1&\equiv& 2y+1-c\,(\mbox{mod }2n)\\
x(y,2)=2y+1&\equiv& 2(z-n)-c\,(\mbox{mod }2n),\\
x(z,2)-x(y,2)=2(z-y)&\equiv& 2(y-z)+1\,(\mbox{mod }2n),\\
4(z-y)&\equiv& 1\,(\mbox{mod }2n)
\end{eqnarray*}
implying $4(z-y)=2n\alpha+1$ for some $\alpha\in Z$ which is a contradiction.
This same argument applies in Subcases 1c), 2a), 2d), 4b) and 4c).

Now for Subcase 2b) summing up Equations \eqref{first2b} and \eqref{second2b} we have
\begin{eqnarray}
x(z,2)=2(z-n)&\equiv& 2y+1-c\,(\mbox{mod }2n),\label{first2b}\\
x(y,2)=2y+1&\equiv& 2(z-n)-c\,(\mbox{mod }2n),\label{second2b}\\
2(z+y)+1&\equiv& 2(z+y)+1-2c\,(\mbox{mod }2n),\label{third2b}
\end{eqnarray}
implying $c=n$. Hence $\{n,y+2n,x(z,2)+4n\}$=$\{n,y+2n,2(z-n)+4n\}$ is a block of ${\cal D}$. We have assumed $x(y,2)=2y+1$ thus $0\leq y\leq n-1$. 
Now $y-n \equiv\, y+n\,(\mbox{mod }2n)$  so there exists a column in $H$ corresponding to the block $\{0,y+n+2n,2(z-n)-n\,(\mbox{mod }2n)+4n\}$. Then $x(y+n,2)=2(y+n-n)\equiv\,2(z-n)-n\,(\mbox{mod }2n)$. This implies $2(y-z)\equiv\, n\, (\mbox{mod }2n)$. Leading to a contradiction, as $n$ is odd.

A similarly argument works for Subcase 2c).

For Case 4), we have $x(y,2)=2(y-n)$ and $x(z,2)=2(z-n)$ so $n+1\leq y<z\leq 2n-1$. Hence $0<z-y\leq n-1$ and we have $0<4(z-y)<4n$ as in Equation \eqref{4n}. So Subcases 4a) and 4d) follow as in Subcase 1a)

\end{proof}

Thus we have proved the following theorem:

\begin{theorem}\label{LDPC_pseudo}
Let $H$ be the incidence matrix given by Equation \ref{IM}. Then $H$ is the parity-check matrix of an LDPC code of length $4n^2 - 2n$, girth at least 6,   rate $(4n^2 - 8n  + 2)/(4n^2 - 2n)$ and  minimum distance $d = 6$ when $n$ is odd. When $n$ is even, the code has minimum distance $d =4$.
\end{theorem}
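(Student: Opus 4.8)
The plan is to derive the theorem by assembling the structural facts already proved, in four parts: block length, girth, rate, and minimum distance. The block length is immediate from the construction. The matrix $H$ of Equation \ref{IM} has one column per block of ${\cal B}=\cup_{j\in{\Bbb Z}_{2n}\setminus\{n\}}O_j$, and each of the $2n-1$ orbits $O_j$ contributes $2n$ blocks, so $H$ is $6n\times(4n^2-2n)$ and the code has length $4n^2-2n$; as recorded in the first Lemma, every column of $H$ has weight $\gamma=3$ and every row has weight $\rho=2n-1$. That same Lemma also gives girth at least $6$: since $\lambda_{y,z}\le 1$ for all $y,z\in{\Bbb Z}_{6n}$, the matrix $H$ satisfies the RC-constraint, which forbids $4$-cycles in the Tanner graph.

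For the rate, recall it equals $(b-\operatorname{rank}(H))/b$ with $b=4n^2-2n$, so the task reduces to pinning down $\operatorname{rank}(H)$ over ${\Bbb Z}_2$. Lemma \ref{atmost} supplies the upper bound $\operatorname{rank}(H)\le 6n-2$ from the linear dependencies among the three blocks of $2n$ rows, and Proposition \ref{LinDep} supplies the matching lower bound by exhibiting the family ${\cal I}=C_1\cup C_2\cup C_3\cup C_4$ of $2n+(2n-1)+(2n-3)+2=6n-2$ blocks whose columns of $H$ are linearly independent. Substituting $\operatorname{rank}(H)=6n-2$ gives rate $(4n^2-2n-(6n-2))/(4n^2-2n)=(4n^2-8n+2)/(4n^2-2n)$ and, correspondingly, $\dim(\ker H)=4n^2-8n+2$.

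For the minimum distance, note that $d$ is the smallest number of columns of $H$ that are linearly dependent over ${\Bbb Z}_2$. First, $d\ge 4$: a weight-$2$ dependence would force two equal columns, impossible since distinct blocks give distinct columns, and a weight-$3$ dependence would require three weight-$3$ columns pairwise sharing at most one row (the RC-constraint), which cannot account for all three rows of any one of them. The last Lemma then settles both parities: for $n$ odd it exhibits a set of $6$ blocks whose columns sum to $0$ (so $d\le 6$) and, by the case analysis driven by the orbit relation $x(z,2)\equiv 2y+1-c$ or $2(y-n)-c\pmod{2n}$, shows that no $4$ columns can be dependent, the governing congruences — of the shape $4(z-y)\equiv 0$, $4(z-y)\equiv 1$, or $2(y-z)\equiv n$ modulo $2n$ — being unsolvable in the relevant ranges precisely when $n$ is odd; hence $d=6$. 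For $n$ even it exhibits $4$ blocks whose columns sum to $0$, which with $d\ge 4$ forces $d=4$. Assembling the four parts proves the theorem.

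The substance of the proof lies entirely in the two results already in hand: the explicit linearly independent family of Proposition \ref{LinDep}, verified by a careful tally of the replication numbers across $C_1,\dots,C_4$, and the exclusion of $4$-column dependences in the minimum-distance lemma for $n$ odd. Granted those, the theorem is pure bookkeeping; the single point I would re-check while writing it out is that the displayed $6$-block configuration really has every point appearing exactly twice, so that the bound $d\le 6$ is self-contained and need not be quoted from \cite{VasMil04}.
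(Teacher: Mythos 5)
Your proposal is correct and follows essentially the same route as the paper, which establishes the theorem simply by assembling the preceding results: the girth lemma, Lemma \ref{atmost} together with Proposition \ref{LinDep} giving $\operatorname{rank}(H)=6n-2$ (hence the stated rate and nullity), and the minimum-distance lemma for the two parities of $n$. Your closing check that the displayed six blocks cover every point exactly twice is a sensible verification (it holds, so the bound $d\le 6$ is indeed self-contained), but it does not change the argument.
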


\section{Conclusion}
In this paper we constructed an infinite family of LDPC codes from PBIBDs for $n \in {\Bbb Z}$. We showed that these codes have Tanner graphs of girth at least 6 and have minimum distance 6 when $n$ is odd. Unlike previous combinatorial constructions we were able to give, explicitly, the rate of the code. In addition, in the past,  combinatorial codes have needed to be quite long to get high rates ($\geq 0.8$).  The codes we obtain have rate $(4n^2 - 8n + 2)/ (4n^2 - 2n)$, and achieve rate $\geq 0.8$ for $n \geq 8$.

\end{document}